
\documentclass{amsart}
\usepackage{amssymb}
\usepackage{amsfonts}

\setcounter{MaxMatrixCols}{10}

\newtheorem{theorem}{Theorem}
\theoremstyle{plain}

\newtheorem{corollary}{Corollary}

\newtheorem{definition}{Definition}

\newtheorem{lemma}{Lemma}

\newtheorem{proposition}{Proposition}
\newtheorem{remark}{Remark}

\numberwithin{equation}{section}
\input{tcilatex}

\begin{document}
\title[Simpson's Type Inequalities]{On Some Inequalities of Simpson's Type
via $h-$Convex Functions}
\author{MEVL\"{U}T TUN\c{C}$^{\blacksquare }$}
\address{$^{\blacksquare }$Kilis 7 Aral\i k University, Faculty of Science
and Arts, Department of Mathematics, 79000, Kilis, Turkey}
\email{mevluttunc@kilis.edu.tr}
\author{\c{C}ET\.{I}N YILDIZ$^{\blacktriangledown }$}
\address{$^{\blacktriangledown }$Atat\"{u}rk University, K.K. Education
Faculty, Department of Mathematics, 25240, Erzurum, TURKEY}
\email{yildizc@atauni.edu.tr}
\author{ALPER EK\.{I}NC\.{I}$^{\spadesuit }$}
\address{$^{\spadesuit }$A\u{g}r\i\ \.{I}brahim \c{C}e\c{c}en University,
Faculty of Science and Letters, Department of Mathematics, 04100, A\u{g}r\i
, TURKEY}
\email{alperekinci@hotmail.com}
\thanks{$^{\blacksquare }$Corresponding Author}
\subjclass[2000]{ 26D15, 26D10}
\keywords{$h-$convex and $h-$concave functions, Simpson's Inequality, H\"{o}%
lder Inequality. }

\begin{abstract}
In this paper, we prove some new inequalities of Simpson's type for
functions whose derivatives of absolute values are $h-$convex and $h-$%
concave functions. Some new estimations are obtained. Also we give some
sophisticated results for some different kinds of convex functions.
\end{abstract}

\maketitle

\section{INTRODUCTION}

The following inequality is well known in the literature as Simpson's
inequality;

\begin{equation}
\frac{1}{b-a}\int_{a}^{b}f\left( x\right) dx-\frac{1}{3}\left[ \frac{f\left(
a\right) +f\left( b\right) }{2}+2f\left( \frac{a+b}{2}\right) \right] \leq 
\frac{1}{2880}\left\Vert f^{\left( 4\right) }\right\Vert _{\infty }\left(
b-a\right) ^{4},  \label{1.1}
\end{equation}

where the mapping $f:\left[ a,b\right] \rightarrow 
\mathbb{R}
$ is assumed to be four times continuously differentiable on the interval
and $f^{\left( 4\right) }$ to be bounded on $\left( a,b\right) $ , that is,%
\begin{equation*}
\left\Vert f^{\left( 4\right) }\right\Vert _{\infty }=\sup_{t\in \left(
a,b\right) }\left\vert f^{\left( 4\right) }\left( t\right) \right\vert
<\infty .
\end{equation*}

For some results which generalize, improve and extend the inequality (\ref%
{1.1}) see the papers \cite{SOZ2}-\cite{AL1}.

\begin{definition}
\bigskip \textit{\cite{GO} We say that }$f:I\rightarrow 
\mathbb{R}
$\textit{\ is Godunova-Levin function or that }$f$\textit{\ belongs to the
class }$Q\left( I\right) $\textit{\ if }$f$\textit{\ is non-negative and for
all }$x,y\in I$\textit{\ and }$t\in \left( 0,1\right) $\textit{\ we have \ \
\ \ \ \ \ \ \ \ \ \ \ }%
\begin{equation}
f\left( tx+\left( 1-t\right) y\right) \leq \frac{f\left( x\right) }{t}+\frac{%
f\left( y\right) }{1-t}.  \label{103}
\end{equation}
\end{definition}

The class $Q(I)$ was firstly described in \cite{GO} by Godunova-Levin. Among
others, it is noted that nonnegative monotone and nonnegative convex
functions belong to this class of functions.

\begin{definition}
\cite{SS1}\textit{\ We say that }$f:I\subseteq 
\mathbb{R}
\rightarrow 
\mathbb{R}
$\textit{\ is a }$P-$\textit{function or that }$f$\textit{\ belongs to the
class }$P\left( I\right) $\textit{\ if }$f$\textit{\ is nonnegative and for
all }$x,y\in I$\textit{\ and }$t\in \left[ 0,1\right] ,$\textit{\ we have}%
\begin{equation}
f\left( tx+\left( 1-t\right) y\right) \leq f\left( x\right) +f\left(
y\right) .  \label{104}
\end{equation}
\end{definition}

\begin{definition}
\textit{\cite{WW} Let }$s\in \left( 0,1\right] $ be a fixed real number$.$%
\textit{\ A function }$f:[0,\infty )\rightarrow \lbrack 0,\infty )$\textit{\
is said to be }$s-$\textit{convex (in the second sense) if \ \ \ \ \ \ \ \ \
\ \ \ }%
\begin{equation}
f\left( tx+\left( 1-t\right) y\right) \leq t^{s}f\left( x\right) +\left(
1-t\right) ^{s}f\left( y\right) ,  \label{105}
\end{equation}%
\textit{for all }$x,y\in \lbrack 0,\infty )$\textit{\ \ and }$t\in \left[ 0,1%
\right] $\textit{. This class of s-convex functions is usually denoted by }$%
K_{s}^{2}$\textit{.}
\end{definition}

In 1978, Breckner introduced $s-$convex functions as a generalization of
convex functions \textit{\cite{WW}. }Also, in that one work Breckner proved
the important fact that the setvalued map is $s-$convex only if the
associated support function is s-convex function \cite{WW2}. Of course, $s-$%
convexity means just convexity when\textit{\ }$s=1.$

\begin{definition}
\bigskip \textit{\cite{SA} Let }$h:J\subseteq 
\mathbb{R}
\rightarrow 
\mathbb{R}
$\textit{\ be a positive function . We say that }$f:I\subseteq 
\mathbb{R}
\rightarrow 
\mathbb{R}
$\textit{\ is }$h-$\textit{convex function, or that }$f$\textit{\ belongs to
the class }$SX\left( h,I\right) $\textit{, if }$f$\textit{\ is nonnegative
and for all }$x,y\in I$\textit{\ and }$t\in (0,1)$\textit{\ we have \ \ \ \
\ \ \ \ \ \ \ \ }%
\begin{equation}
f\left( tx+\left( 1-t\right) y\right) \leq h\left( t\right) f\left( x\right)
+h\left( 1-t\right) f\left( y\right) .  \label{106}
\end{equation}
\end{definition}

\bigskip If inequality (\ref{106}) is reversed, then $f$ is said to be $h-$%
concave, i.e. $f\in SV\left( h,I\right) $. Obviously, if $h\left( t\right)
=t $, then all nonnegative convex functions belong to $SX\left( h,I\right) $%
\ and all nonnegative concave functions belong to $SV\left( h,I\right) $; if 
$h\left( t\right) =\frac{1}{t}$, then $SX\left( h,I\right) =Q\left( I\right) 
$; if $h\left( t\right) =1$, then $SX\left( h,I\right) \supseteq P\left(
I\right) $; and if $h\left( t\right) =t^{s}$, where $s\in \left( 0,1\right) $%
, then $SX\left( h,I\right) \supseteq K_{s}^{2}$.

\begin{remark}
\textit{\cite{SA} }Let $h$ be a non-negative function such that%
\begin{equation*}
h\left( \alpha \right) \geq \alpha
\end{equation*}%
for all $\alpha \in (0,1)$. If $f$ is a non-negative convex function on $I$
, then for $x,y\in I$ , $\alpha \in (0,1)$ we have 
\begin{equation*}
f\left( \alpha x+(1-\alpha )y\right) \leq \alpha f(x)+(1-\alpha )f(y)\leq
h(\alpha )f(x)+h(1-\alpha )f(y).
\end{equation*}%
So, $f\in SX(h,I)$. Similarly, if the function $h$ has the property: $%
h(\alpha )\leq \alpha $ for all $\alpha \in (0,1)$, then any non-negative
concave function $f$ belongs to the class $SV(h,I)$.
\end{remark}

\begin{definition}
\bigskip\ \textit{\cite{SA} }A function $h:J\rightarrow 
\mathbb{R}
$ is said to be a supermultiplicative function if%
\begin{equation}
h\left( xy\right) \geq h\left( x\right) h\left( y\right)  \label{109}
\end{equation}%
for all $x,y\in J.$
\end{definition}

If inequality (\ref{109}) is reversed, then $h$ is said to be a
submultiplicative function. If equality held in (\ref{109}), then $h$ is
said to be a multiplicative function.

In \cite{SOZ2}, Sar\i kaya \textit{et.al} established the following
Simpson-type inequality for convex functions:

\begin{theorem}
Let $f:I\subset 
\mathbb{R}
\rightarrow 
\mathbb{R}
,$ be differentiable mapping on $I^{\circ }$ such that $f^{\prime }\in
L_{1}[a,b],$ where $a,b\in I$ with $a<b.$ If $\left\vert f^{\prime
}\right\vert $ is a convex on $[a,b],$ then the following inequality holds:%
\begin{eqnarray}
&&\left\vert \frac{1}{6}\left[ f\left( a\right) +4f\left( \frac{a+b}{2}%
\right) +f\left( b\right) \right] -\frac{1}{b-a}\int_{a}^{b}f\left( x\right)
dx\right\vert  \label{b} \\
&\leq &\frac{5(b-a)}{72}\left[ \left\vert f^{\prime }(a)\right\vert
+\left\vert f^{\prime }(b)\right\vert \right]  \notag
\end{eqnarray}
\end{theorem}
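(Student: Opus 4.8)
The plan is to reduce the inequality to a single integral identity of Simpson type and then to estimate the resulting integral by means of the convexity of $|f^{\prime }|$. First I would establish the identity
\begin{equation*}
\frac{1}{6}\left[ f(a)+4f\left( \frac{a+b}{2}\right) +f(b)\right] -\frac{1}{b-a}\int_{a}^{b}f(x)\,dx=(b-a)\int_{0}^{1}k(t)\,f^{\prime }\left( tb+(1-t)a\right) dt,
\end{equation*}
where the kernel $k:[0,1]\rightarrow \mathbb{R}$ is given by
\begin{equation*}
k(t)=\left\{
\begin{array}{ll}
t-\dfrac{1}{6}, & t\in \left[ 0,\dfrac{1}{2}\right] , \\[2mm]
t-\dfrac{5}{6}, & t\in \left( \dfrac{1}{2},1\right] .
\end{array}
\right.
\end{equation*}
To prove it, write $f^{\prime }\left( tb+(1-t)a\right) =\frac{1}{b-a}\frac{d}{dt}f\left( tb+(1-t)a\right) $, split the integral on the right at $t=\frac{1}{2}$, and integrate each piece by parts; the boundary contributions at $t=0,\frac{1}{2},1$ collapse to $\frac{1}{6}f(a)$, $\frac{2}{3}f\left( \frac{a+b}{2}\right) $ and $\frac{1}{6}f(b)$, while the two leftover integrals combine, after the substitution $x=tb+(1-t)a$, into $-\frac{1}{b-a}\int_{a}^{b}f(x)\,dx$.

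Next I would take absolute values in the identity and push the modulus inside the integral by the triangle inequality, obtaining a bound by $(b-a)\int_{0}^{1}|k(t)|\,|f^{\prime }(tb+(1-t)a)|\,dt$. Since $|f^{\prime }|$ is convex, $|f^{\prime }(tb+(1-t)a)|\leq t|f^{\prime }(b)|+(1-t)|f^{\prime }(a)|$ for every $t\in [0,1]$, so the quantity on the left of \eqref{b} is at most
\begin{equation*}
(b-a)\left[ |f^{\prime }(b)|\int_{0}^{1}t\,|k(t)|\,dt+|f^{\prime }(a)|\int_{0}^{1}(1-t)\,|k(t)|\,dt\right] .
\end{equation*}

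It then remains to compute the two weight integrals. One checks that $|k(1-t)|=|k(t)|$, so the change of variable $t\mapsto 1-t$ gives $\int_{0}^{1}t\,|k(t)|\,dt=\int_{0}^{1}(1-t)\,|k(t)|\,dt=\frac{1}{2}\int_{0}^{1}|k(t)|\,dt$, and evaluating $\int_{0}^{1}|k(t)|\,dt$ by splitting $\left[ 0,\frac{1}{2}\right] $ at the zero $t=\frac{1}{6}$ of $k$ and $\left[ \frac{1}{2},1\right] $ at the zero $t=\frac{5}{6}$ yields $\int_{0}^{1}|k(t)|\,dt=\frac{5}{36}$; hence each weighted integral equals $\frac{5}{72}$. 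Substituting these values back produces exactly $\frac{5(b-a)}{72}\left[ |f^{\prime }(a)|+|f^{\prime }(b)|\right] $, which is the asserted bound. I expect the main obstacle to be the bookkeeping in the identity rather than the estimation: one has to be careful that the integration by parts across the two subintervals distributes the boundary values with the correct weights, since a misplaced constant there would corrupt the $\frac{1}{6}$ appearing in the kernel and therefore the final constant $\frac{5}{72}$. Once the identity and the kernel are pinned down, the convexity step and the elementary integrals are routine, and the same scheme — replacing the convexity estimate by the defining inequality of the class in question — is what will later serve for $h$-convex, Godunova--Levin, $P$- and $s$-convex functions.
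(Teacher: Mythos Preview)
Your proposal is correct and follows essentially the same route as the paper. The paper does not prove this theorem from scratch --- it is quoted from \cite{SOZ2} --- but it recovers the inequality (see the Remark after Theorem~\ref{t2}) by specializing the $h$-convex argument to $h(t)=t$, and that argument is exactly yours: invoke the kernel identity of Lemma~\ref{LEM}, bound $|f'(ta+(1-t)b)|$ by convexity, and evaluate the resulting weighted integrals of $|k(t)|$.

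Two cosmetic remarks. First, the paper's Lemma~\ref{LEM} parametrizes the derivative as $f'(ta+(1-t)b)$ rather than your $f'(tb+(1-t)a)$; this just interchanges the roles of $t$ and $1-t$ and has no effect on the final symmetric bound. Second, your use of the kernel symmetry $|k(1-t)|=|k(t)|$ to conclude $\int_{0}^{1}t\,|k(t)|\,dt=\int_{0}^{1}(1-t)\,|k(t)|\,dt=\tfrac{1}{2}\int_{0}^{1}|k(t)|\,dt=\tfrac{5}{72}$ is a tidy shortcut; the paper's route (via the quantities $A$ and $B$ in Theorem~\ref{t2} with $h(t)=t$) computes the same numbers by splitting each integral into four pieces directly. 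Either way one lands on $\tfrac{5}{72}$, so the arguments are interchangeable.
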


\bigskip In \cite{SAR}, Sar\i kaya \textit{et.al} established the following
Hadmard-type inequality for $h-$convex functions:

\begin{theorem}
Let $f\in SX\left( h,I\right) $, $a,b\in I,$ with $a<b$ and $f\in
L_{1}\left( \left[ a,b\right] \right) $. Then%
\begin{equation}
\frac{1}{2h\left( \frac{1}{2}\right) }f\left( \frac{a+b}{2}\right) \leq 
\frac{1}{b-a}\int_{a}^{b}f\left( x\right) dx\leq \left[ f\left( a\right)
+f\left( b\right) \right] \int_{0}^{1}h\left( t\right) dt.  \label{102}
\end{equation}
\end{theorem}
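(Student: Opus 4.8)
The plan is to prove the two inequalities separately, in each case starting from the defining inequality (\ref{106}) for $h$-convex functions applied to a suitable parametrization of $[a,b]$ and then integrating in the parameter $t$ over $[0,1]$. Throughout, the key change-of-variable identity is that for $t\in[0,1]$ the point $ta+(1-t)b$ traverses $[a,b]$, so that
\begin{equation*}
\int_{0}^{1}f\left( ta+(1-t)b\right) \,dt=\int_{0}^{1}f\left( (1-t)a+tb\right) \,dt=\frac{1}{b-a}\int_{a}^{b}f\left( x\right) \,dx,
\end{equation*}
which is legitimate because $f\in L_{1}\left( [a,b]\right) $. One should also record the symmetry $\int_{0}^{1}h(1-t)\,dt=\int_{0}^{1}h(t)\,dt$, obtained from the substitution $t\mapsto 1-t$.

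For the \textbf{right-hand inequality}, I would apply (\ref{106}) directly with $x=a$, $y=b$, giving $f\left( ta+(1-t)b\right) \leq h(t)f(a)+h(1-t)f(b)$ for $t\in(0,1)$. Integrating both sides over $t\in[0,1]$ and using the two displayed identities above turns the left side into $\frac{1}{b-a}\int_{a}^{b}f(x)\,dx$ and the right side into $\left[ f(a)+f(b)\right] \int_{0}^{1}h(t)\,dt$, which is exactly the claimed bound.

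For the \textbf{left-hand inequality}, the trick is to feed a midpoint into (\ref{106}). Take $t=\tfrac12$ together with the two points $x=ta+(1-t)b$ and $y=(1-t)a+tb$; since their average is $\frac{a+b}{2}$ independently of $t$, (\ref{106}) yields
\begin{equation*}
f\left( \frac{a+b}{2}\right) \leq h\!\left( \tfrac12\right) \left[ f\left( ta+(1-t)b\right) +f\left( (1-t)a+tb\right) \right] .
\end{equation*}
Integrating in $t$ over $[0,1]$ and invoking the change-of-variable identity on each of the two terms on the right gives $f\left( \frac{a+b}{2}\right) \leq \frac{2\,h(1/2)}{b-a}\int_{a}^{b}f(x)\,dx$; dividing by $2\,h(1/2)$ (positive, since $h$ is a positive function) produces the stated left bound.

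I do not expect a serious obstacle here: the argument is essentially the classical Hermite--Hadamard derivation with $t$ and $1-t$ replaced by $h(t)$ and $h(1-t)$. The only points deserving a word of care are the measurability/integrability of the composed functions $t\mapsto f(ta+(1-t)b)$ (guaranteed by $f\in L_{1}$ and the affine substitution) and the use of positivity of $h$ when dividing by $2h(1/2)$; no monotonicity, continuity, or multiplicativity hypothesis on $h$ is needed for this statement.
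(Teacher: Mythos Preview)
Your argument is correct and is precisely the standard Hermite--Hadamard derivation adapted to $h$-convexity. Note, however, that the paper does not supply its own proof of this theorem: it is quoted as a known result from \cite{SAR}, so there is no in-paper proof to compare against. Your proof matches the one given in that reference.
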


\bigskip For recent results and generalizations concerning $h-$convex
functions see \cite{SA}, \cite{SAR}.

The aim of this paper is to establish new inequalities for functions whose
derivatives in absolute value are $h-$convex and $h-$concave functions.

\section{\protect\bigskip Inequalities for $h-$convex and $h-$concave
functions}

To prove our new result we need the following lemma (see \cite{AL1}).

\begin{lemma}
\label{LEM}Let $f:I\subset 
\mathbb{R}
\rightarrow 
\mathbb{R}
$ be an absolutely continuous mapping on $I^{0}$ where $a,b\in I$ with $a<b.$
Then the following equality holds:%
\begin{eqnarray*}
&&\frac{1}{6}\left[ f\left( a\right) +4f\left( \frac{a+b}{2}\right) +f\left(
b\right) \right] -\frac{1}{b-a}\int_{a}^{b}f\left( x\right) dx \\
&=&\left( b-a\right) \int_{0}^{1}k\left( t\right) f^{\prime }\left(
ta+\left( 1-t\right) b\right) dt,
\end{eqnarray*}

where%
\begin{equation*}
k\left( t\right) =\left\{ 
\begin{array}{c}
t-\frac{1}{6},\text{\ \ }t\in \left[ 0,\frac{1}{2}\right) \\ 
t-\frac{5}{6},\text{ \ }t\in \left[ \frac{1}{2},1\right]%
\end{array}%
\right. .
\end{equation*}
\end{lemma}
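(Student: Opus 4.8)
The plan is to verify the identity by working backwards from the right-hand side, splitting the integral at $t=\frac12$ according to the piecewise definition of $k$ and then integrating by parts on each piece. Thus I would first write
\begin{align*}
(b-a)\int_0^1 k(t)\,f'\!\left(ta+(1-t)b\right)dt
&=(b-a)\int_0^{1/2}\!\left(t-\frac16\right)f'\!\left(ta+(1-t)b\right)dt\\
&\quad +(b-a)\int_{1/2}^1\!\left(t-\frac56\right)f'\!\left(ta+(1-t)b\right)dt .
\end{align*}
The essential observation is that $\frac{d}{dt}\,f\!\left(ta+(1-t)b\right)=(a-b)\,f'\!\left(ta+(1-t)b\right)$, so in each of the two integrals I integrate by parts taking the linear polynomial ($t-\frac16$ or $t-\frac56$) as the factor to be differentiated and $\frac{1}{a-b}f\!\left(ta+(1-t)b\right)$ as an antiderivative of $f'\!\left(ta+(1-t)b\right)$. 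This is legitimate under the stated hypotheses: $f$ is absolutely continuous and $t\mapsto ta+(1-t)b$ is affine (hence Lipschitz), so $t\mapsto f\!\left(ta+(1-t)b\right)$ is absolutely continuous on $[0,1]$ and the integration-by-parts formula for absolutely continuous functions applies.

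Carrying this out, the boundary terms involve $f\!\left(0\cdot a+1\cdot b\right)=f(b)$, $f\!\left(\frac12 a+\frac12 b\right)=f\!\left(\frac{a+b}{2}\right)$ and $f\!\left(1\cdot a+0\cdot b\right)=f(a)$, weighted by the values of the linear polynomials at the endpoints, namely $\frac12-\frac16=\frac13$ and $-\frac16$ from the first integral and $1-\frac56=\frac16$ and $\frac12-\frac56=-\frac13$ from the second. These assemble (after the factor $\frac{1}{a-b}$) into $\frac{1}{a-b}\big[\frac16 f(a)+\frac23 f\!\left(\frac{a+b}{2}\right)+\frac16 f(b)\big]$, which is precisely $\frac{1}{a-b}$ times $\frac16\big[f(a)+4f\!\left(\frac{a+b}{2}\right)+f(b)\big]$. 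The two remaining integrals coming from the by-parts formula add up to $-\frac{1}{a-b}\int_0^1 f\!\left(ta+(1-t)b\right)dt$, and the change of variable $x=ta+(1-t)b$ (so that $t=0$ corresponds to $x=b$, $t=1$ to $x=a$, and $dx=(a-b)\,dt$) rewrites this as $-\frac{1}{a-b}\cdot\frac{1}{b-a}\int_a^b f(x)\,dx$.

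The last step is simply to multiply everything back by $(b-a)$ and collect the pieces, using $\frac{b-a}{a-b}=-1$, to read off the claimed equality. I expect no conceptual obstacle here; the only point that really demands care is the sign bookkeeping — one must track consistently the factor $\frac{1}{a-b}=-\frac{1}{b-a}$ produced by the antiderivative and the orientation reversal in the change of variable, since that is where a slip is most likely. All the remaining manipulations are elementary arithmetic with the constants $\frac16$, $\frac13$, $\frac23$ and $\frac56$.
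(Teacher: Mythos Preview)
The paper does not actually prove this lemma; it simply cites it from \cite{AL1}. So there is no paper proof to compare against. Your approach---split the integral at $t=\tfrac12$, integrate by parts on each piece using $\tfrac{1}{a-b}f(ta+(1-t)b)$ as an antiderivative of $f'(ta+(1-t)b)$, then change variables---is the standard method and is exactly how such kernel identities are established.

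There is one issue, though, and it is precisely the sign bookkeeping you flag as the delicate point. If you carry your own computation through to the end you get
\[
(b-a)\int_0^1 k(t)\,f'(ta+(1-t)b)\,dt
= (b-a)\cdot\frac{1}{a-b}\Bigl[\tfrac16 f(a)+\tfrac23 f\bigl(\tfrac{a+b}{2}\bigr)+\tfrac16 f(b)\Bigr]
+\frac{1}{b-a}\int_a^b f(x)\,dx,
\]
which equals \emph{minus} the left-hand side of the lemma, not the left-hand side itself. (A quick check: with $f(x)=x^4$ on $[0,1]$ the Simpson difference on the left is $+\tfrac{1}{120}$, while the kernel integral on the right evaluates to $-\tfrac{1}{120}$.) So your method is sound, but the final sentence ``read off the claimed equality'' would in fact produce the identity with the opposite sign. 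This is a sign slip in the lemma as stated in the paper (inherited from the cited source, where the argument of $f'$ is presumably $tb+(1-t)a$ rather than $ta+(1-t)b$); it is harmless for all the subsequent results, since the lemma is only ever used inside an absolute value.
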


\begin{theorem}
\label{t1}Let $h:J\subset 
\mathbb{R}
\rightarrow 
\mathbb{R}
\ $be a non-negative function$,\ f:I\subset \left[ 0,\infty \right)
\rightarrow 
\mathbb{R}
$ be a differentiable function on $I^{\circ }$ such that $h^{q},f^{\prime
}\in L\left[ a,b\right] $ where $a,b\in I^{\circ }$ with $a<b$ and $h\left(
\alpha \right) \geq \alpha $. If $\left\vert f^{\prime }\right\vert $ is $h-$%
convex on $I$, then%
\begin{eqnarray}
&&  \label{a} \\
&&\left\vert \frac{1}{b-a}\int_{a}^{b}f\left( x\right) dx-\frac{1}{3}\left[ 
\frac{f\left( a\right) +f\left( b\right) }{2}+2f\left( \frac{a+b}{2}\right) %
\right] \right\vert   \notag \\
&\leq &\frac{\left( b-a\right) }{3}\left( \frac{1+2^{p+1}}{6\left(
p+1\right) }\right) ^{\frac{1}{p}}\times \left\{ \left\vert f^{\prime
}\left( a\right) \right\vert \left[ \left( \int_{0}^{\frac{1}{2}}h^{q}\left(
t\right) dt\right) ^{\frac{1}{q}}+\left( \int_{\frac{1}{2}}^{1}h^{q}\left(
t\right) dt\right) ^{\frac{1}{q}}\right] \right.   \notag \\
&&+\left. \left\vert f^{\prime }\left( b\right) \right\vert \left[ \left(
\int_{0}^{\frac{1}{2}}h^{q}\left( 1-t\right) dt\right) ^{\frac{1}{q}}+\left(
\int_{\frac{1}{2}}^{1}h^{q}\left( 1-t\right) dt\right) ^{\frac{1}{q}}\right]
\right\} .  \notag
\end{eqnarray}%
where $\frac{1}{p}+\frac{1}{q}=1.$
\end{theorem}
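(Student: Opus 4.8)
The plan is to start from the identity in Lemma \ref{LEM}, which expresses the quantity of interest as $(b-a)\int_0^1 k(t)\,f'(ta+(1-t)b)\,dt$. Taking absolute values and noting that $|k(t)|\le |t-\tfrac16|$ on $[0,\tfrac12)$ and $|k(t)|\le|t-\tfrac56|$ on $[\tfrac12,1]$, I would split the integral at $t=\tfrac12$ and bound
\[
\left|\frac16\left[f(a)+4f\!\left(\tfrac{a+b}{2}\right)+f(b)\right]-\frac{1}{b-a}\int_a^b f(x)\,dx\right|\le (b-a)\left(\int_0^{1/2}|k(t)|\,|f'(ta+(1-t)b)|\,dt+\int_{1/2}^1|k(t)|\,|f'(ta+(1-t)b)|\,dt\right).
\]
To each of these two integrals I would apply H\"older's inequality with exponents $p,q$, pulling out $\left(\int |k(t)|^p dt\right)^{1/p}$ on each piece. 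A short computation gives $\int_0^{1/2}|t-\tfrac16|^p\,dt+\int_{1/2}^1|t-\tfrac56|^p\,dt = \frac{1+2^{p+1}}{6^{p+1}(p+1)}$ (using symmetry of the two pieces about their respective kinks and adding the resulting powers of $\tfrac16$ and $\tfrac13$), whose $p$-th root is $\frac{1}{6}\left(\frac{1+2^{p+1}}{p+1}\right)^{1/p}$; this is where the factor $\left(\frac{1+2^{p+1}}{6(p+1)}\right)^{1/p}$ together with the $\tfrac13$ out front comes from after rescaling.

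Next I would estimate the remaining factors $\left(\int_0^{1/2}|f'(ta+(1-t)b)|^q dt\right)^{1/q}$ and $\left(\int_{1/2}^1|f'(ta+(1-t)b)|^q dt\right)^{1/q}$. Here I use that $|f'|$ is $h$-convex: $|f'(ta+(1-t)b)|^q \le \big(h(t)|f'(a)|+h(1-t)|f'(b)|\big)^q$ is not directly what we want, so instead I would apply $h$-convexity \emph{before} raising to the $q$-th power is awkward; the cleaner route is to note the statement's right-hand side separates $|f'(a)|$ and $|f'(b)|$ additively with each multiplied by an $L^q$-norm of $h$. This suggests applying H\"older in the form $\left(\int |k(t)|\,|f'|\,dt\right)\le \|k\|_p\,\||f'|\|_q$ and then using $h$-convexity pointwise on $|f'(ta+(1-t)b)|$, followed by the elementary inequality $\left(\int (u(t)+v(t))^q dt\right)^{1/q}\le \left(\int u^q\right)^{1/q}+\left(\int v^q\right)^{1/q}$ (Minkowski) with $u(t)=h(t)|f'(a)|$ and $v(t)=h(1-t)|f'(b)|$. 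Applying this on $[0,\tfrac12]$ and on $[\tfrac12,1]$ separately yields exactly the four bracketed terms $\left(\int_0^{1/2}h^q(t)dt\right)^{1/q}$, $\left(\int_{1/2}^1 h^q(t)dt\right)^{1/q}$, $\left(\int_0^{1/2}h^q(1-t)dt\right)^{1/q}$, $\left(\int_{1/2}^1 h^q(1-t)dt\right)^{1/q}$, multiplied by $|f'(a)|$ and $|f'(b)|$ respectively.

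Assembling the two pieces, each carrying the common factor $(b-a)\cdot\frac16\left(\frac{1+2^{p+1}}{p+1}\right)^{1/p}=\frac{(b-a)}{3}\left(\frac{1+2^{p+1}}{6(p+1)}\right)^{1/p}$ after absorbing constants appropriately (the hypothesis $h(\alpha)\ge\alpha$ is what guarantees the left-hand side of the paper's displayed inequality, written with the $\tfrac13$-normalization, matches the Lemma's $\tfrac16$-normalization and that $|f'|$ being $h$-convex is consistent), gives the claimed inequality. The main obstacle I anticipate is purely bookkeeping: getting the constant $\frac16\left(\frac{1+2^{p+1}}{p+1}\right)^{1/p}$ right from $\int_0^{1/2}|t-\tfrac16|^p dt+\int_{1/2}^1|t-\tfrac56|^p dt$, since one must split $\int_0^{1/2}|t-\tfrac16|^p dt$ further at $t=\tfrac16$ into $\int_0^{1/6}(\tfrac16-t)^p dt+\int_{1/6}^{1/2}(t-\tfrac16)^p dt=\frac{(1/6)^{p+1}+(1/3)^{p+1}}{p+1}$, and similarly for the other piece which by the substitution $t\mapsto 1-t$ equals the same thing, so the total is $\frac{2\left[(1/6)^{p+1}+(1/3)^{p+1}\right]}{p+1}=\frac{2(1/6)^{p+1}(1+2^{p+1})}{p+1}=\frac{1+2^{p+1}}{6^{p+1}(p+1)}$; everything else is a routine application of H\"older, Minkowski, and $h$-convexity, and no step requires $h$ to be supermultiplicative or any structure beyond nonnegativity and $h(\alpha)\ge\alpha$.
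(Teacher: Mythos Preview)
Your approach is essentially the paper's: start from Lemma~\ref{LEM}, split at $t=\tfrac12$, and combine H\"older with $h$-convexity. The paper orders the two ingredients slightly differently: it applies the pointwise bound $|f'(ta+(1-t)b)|\le h(t)|f'(a)|+h(1-t)|f'(b)|$ \emph{first}, separates the integrand linearly into an $|f'(a)|$-term and an $|f'(b)|$-term, and only then applies H\"older to each product $\int|k(t)|\,h(t)\,dt$ and $\int|k(t)|\,h(1-t)\,dt$. This avoids Minkowski altogether. Your route (H\"older first, then $h$-convexity inside the $L^q$-norm, then Minkowski to split the sum) is equally valid and gives the identical bound; the paper's ordering is just a little cleaner.

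Your bookkeeping, however, contains slips. First, $\frac{2\bigl[(1/6)^{p+1}+(1/3)^{p+1}\bigr]}{p+1}=\frac{2(1+2^{p+1})}{6^{p+1}(p+1)}$, not $\frac{1+2^{p+1}}{6^{p+1}(p+1)}$; you dropped a factor of~$2$. Second, the asserted identity $\frac{1}{6}\bigl(\frac{1+2^{p+1}}{p+1}\bigr)^{1/p}=\frac{1}{3}\bigl(\frac{1+2^{p+1}}{6(p+1)}\bigr)^{1/p}$ is false (it would force $6^{1/p}=2$). What the argument actually needs is not the \emph{sum} of the two $p$-integrals but each one separately, and
\[
\left(\int_0^{1/2}\Bigl|t-\tfrac16\Bigr|^p dt\right)^{1/p}
=\left(\int_{1/2}^{1}\Bigl|t-\tfrac56\Bigr|^p dt\right)^{1/p}
=\left(\frac{1+2^{p+1}}{6^{p+1}(p+1)}\right)^{1/p}
=\frac{1}{6}\left(\frac{1+2^{p+1}}{6(p+1)}\right)^{1/p},
\]
which is factored out as the common coefficient. (This is in fact smaller than the $\tfrac{1}{3}$ coefficient in the statement, so the stated inequality certainly holds.) Finally, the hypothesis $h(\alpha)\ge\alpha$ plays no role in the proof itself---it is listed only so that ordinary convexity is subsumed by $h$-convexity---and the ``$\tfrac13$-normalization'' and ``$\tfrac16$-normalization'' you try to reconcile are literally the same expression, since $\frac{1}{3}\bigl[\frac{f(a)+f(b)}{2}+2f(\frac{a+b}{2})\bigr]=\frac{1}{6}\bigl[f(a)+4f(\frac{a+b}{2})+f(b)\bigr]$.
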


\begin{proof}
From Lemma \ref{LEM}, $h-$convexity of $\left\vert f^{\prime }\right\vert $
and properties of absolute value, we have%
\begin{eqnarray*}
&&\left\vert \frac{1}{b-a}\int_{a}^{b}f\left( x\right) dx-\frac{1}{3}\left[ 
\frac{f\left( a\right) +f\left( b\right) }{2}+2f\left( \frac{a+b}{2}\right) %
\right] \right\vert \\
&=&\left( b-a\right) \left\vert \int_{0}^{1}k\left( t\right) f^{\prime
}\left( ta+\left( 1-t\right) b\right) dt\right\vert \\
&\leq &\left( b-a\right) \left( \int_{0}^{\frac{1}{2}}\left\vert t-\frac{1}{6%
}\right\vert \left\vert f^{\prime }\left( ta+\left( 1-t\right) b\right)
\right\vert dt+\int_{\frac{1}{2}}^{1}\left\vert t-\frac{5}{6}\right\vert
\left\vert f^{\prime }\left( ta+\left( 1-t\right) b\right) \right\vert
dt\right) \\
&\leq &\left( b-a\right) \left\{ \int_{0}^{\frac{1}{2}}\left\vert t-\frac{1}{%
6}\right\vert \left( h\left( t\right) \left\vert f^{\prime }\left( a\right)
\right\vert +h\left( 1-t\right) \left\vert f^{\prime }\left( b\right)
\right\vert \right) dt\right. \\
&&\left. +\int_{\frac{1}{2}}^{1}\left\vert t-\frac{5}{6}\right\vert \left(
h\left( t\right) \left\vert f^{\prime }\left( a\right) \right\vert +h\left(
1-t\right) \left\vert f^{\prime }\left( b\right) \right\vert \right)
dt\right\} \\
&=&\left( b-a\right) \left\vert f^{\prime }\left( a\right) \right\vert
\left\{ \int_{0}^{\frac{1}{2}}\left\vert t-\frac{1}{6}\right\vert h\left(
t\right) dt+\int_{\frac{1}{2}}^{1}\left\vert t-\frac{5}{6}\right\vert
h\left( t\right) dt\right\} \\
&&+\left( b-a\right) \left\vert f^{\prime }\left( b\right) \right\vert
\left\{ \int_{0}^{\frac{1}{2}}\left\vert t-\frac{1}{6}\right\vert h\left(
1-t\right) dt+\int_{\frac{1}{2}}^{1}\left\vert t-\frac{5}{6}\right\vert
h\left( 1-t\right) dt\right\} .
\end{eqnarray*}

By H\"{o}lder's inequality, we get%
\begin{eqnarray*}
&&\left\vert \frac{1}{b-a}\int_{a}^{b}f\left( x\right) dx-\frac{1}{3}\left[ 
\frac{f\left( a\right) +f\left( b\right) }{2}+2f\left( \frac{a+b}{2}\right) %
\right] \right\vert \\
&\leq &\left( b-a\right) \left\vert f^{\prime }\left( a\right) \right\vert
\left\{ \left( \int_{0}^{\frac{1}{2}}\left\vert t-\frac{1}{6}\right\vert
^{p}dt\right) ^{\frac{1}{p}}\left( \int_{0}^{\frac{1}{2}}h^{q}\left(
t\right) dt\right) ^{\frac{1}{q}}\right. \\
&&\text{ \ \ \ \ \ \ \ \ \ \ \ \ \ \ \ \ \ \ }\left. +\left( \int_{\frac{1}{2%
}}^{1}\left\vert t-\frac{5}{6}\right\vert ^{p}dt\right) ^{\frac{1}{p}}\left(
\int_{\frac{1}{2}}^{1}h^{q}\left( t\right) dt\right) ^{\frac{1}{q}}\right\}
\\
&&+\left( b-a\right) \left\vert f^{\prime }\left( b\right) \right\vert
\left\{ \left( \int_{0}^{\frac{1}{2}}\left\vert t-\frac{1}{6}\right\vert
^{p}dt\right) ^{\frac{1}{p}}\left( \int_{0}^{\frac{1}{2}}h^{q}\left(
1-t\right) dt\right) ^{\frac{1}{q}}\right. \\
&&\text{ \ \ \ \ \ \ \ \ \ \ \ \ \ \ \ \ \ \ \ \ \ \ }\left. +\left( \int_{%
\frac{1}{2}}^{1}\left\vert t-\frac{5}{6}\right\vert ^{p}dt\right) ^{\frac{1}{%
p}}\left( \int_{\frac{1}{2}}^{1}h^{q}\left( 1-t\right) dt\right) ^{\frac{1}{q%
}}\right\}
\end{eqnarray*}%
Since%
\begin{equation*}
\int_{0}^{\frac{1}{2}}\left\vert t-\frac{1}{6}\right\vert ^{p}dt=\int_{0}^{%
\frac{1}{6}}\left( \frac{1}{6}-t\right) ^{p}dt+\int_{\frac{1}{6}}^{\frac{1}{2%
}}\left( t-\frac{1}{6}\right) ^{p}dt=\frac{1}{p+1}\left( \frac{1}{6^{p+1}}+%
\frac{1}{3^{p+1}}\right)
\end{equation*}%
and%
\begin{equation*}
\int_{\frac{1}{2}}^{1}\left\vert t-\frac{5}{6}\right\vert ^{p}dt=\int_{\frac{%
1}{2}}^{\frac{5}{6}}\left( \frac{5}{6}-t\right) ^{p}dt+\int_{\frac{5}{6}%
}^{1}\left( t-\frac{5}{6}\right) ^{p}dt=\frac{1}{p+1}\left( \frac{1}{6^{p+1}}%
+\frac{1}{3^{p+1}}\right) ,
\end{equation*}%
we obtain%
\begin{eqnarray*}
&&\left\vert \frac{1}{b-a}\int_{a}^{b}f\left( x\right) dx-\frac{1}{3}\left[ 
\frac{f\left( a\right) +f\left( b\right) }{2}+2f\left( \frac{a+b}{2}\right) %
\right] \right\vert \\
&\leq &\frac{\left( b-a\right) }{3}\left( \frac{1+2^{p+1}}{6\left(
p+1\right) }\right) ^{\frac{1}{p}}\times \left\{ \left\vert f^{\prime
}\left( a\right) \right\vert \left[ \left( \int_{0}^{\frac{1}{2}}h^{q}\left(
t\right) dt\right) ^{\frac{1}{q}}+\left( \int_{\frac{1}{2}}^{1}h^{q}\left(
t\right) dt\right) ^{\frac{1}{q}}\right] \right. \\
&&+\left. \left\vert f^{\prime }\left( b\right) \right\vert \left[ \left(
\int_{0}^{\frac{1}{2}}h^{q}\left( 1-t\right) dt\right) ^{\frac{1}{q}}+\left(
\int_{\frac{1}{2}}^{1}h^{q}\left( 1-t\right) dt\right) ^{\frac{1}{q}}\right]
\right\} .
\end{eqnarray*}%
which completes the proof.
\end{proof}

\begin{corollary}
In Theorem \ref{t1}, if we choose $p=q=2,$ we obtain 
\begin{eqnarray*}
&&\left\vert \frac{1}{b-a}\int_{a}^{b}f\left( x\right) dx-\frac{1}{3}\left[ 
\frac{f\left( a\right) +f\left( b\right) }{2}+2f\left( \frac{a+b}{2}\right) %
\right] \right\vert \\
&\leq &\frac{b-a}{3\sqrt{2}}\left\{ \left\vert f^{\prime }\left( a\right)
\right\vert \left[ \left( \int_{0}^{\frac{1}{2}}h\left( t^{2}\right)
dt\right) ^{\frac{1}{2}}+\left( \int_{\frac{1}{2}}^{1}h\left( t^{2}\right)
dt\right) ^{\frac{1}{2}}\right] \right. \\
&&+\left. \left\vert f^{\prime }\left( b\right) \right\vert \left[ \left(
\int_{0}^{\frac{1}{2}}h\left( \left( 1-t\right) ^{2}\right) dt\right) ^{%
\frac{1}{2}}+\left( \int_{\frac{1}{2}}^{1}h\left( \left( 1-t\right)
^{2}\right) dt\right) ^{\frac{1}{2}}\right] \right\}
\end{eqnarray*}%
where $h$ is supermultiplicative.
\end{corollary}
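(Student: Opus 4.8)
The plan is to specialize the inequality (\ref{a}) of Theorem \ref{t1} to the exponents $p=q=2$ and then to invoke the supermultiplicativity of $h$ in order to rewrite the integrals of $h^{q}$ in the cleaner form involving $h$ of a square.

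First I would evaluate the numerical constant occurring on the right-hand side of (\ref{a}). With $p=2$ one has $\frac{1+2^{p+1}}{6\left( p+1\right) }=\frac{1+8}{18}=\frac{1}{2}$, whence $\left( \frac{1+2^{p+1}}{6\left( p+1\right) }\right) ^{\frac{1}{p}}=\left( \frac{1}{2}\right) ^{\frac{1}{2}}=\frac{1}{\sqrt{2}}$, so that the factor $\frac{\left( b-a\right) }{3}\left( \frac{1+2^{p+1}}{6\left( p+1\right) }\right) ^{\frac{1}{p}}$ reduces to $\frac{b-a}{3\sqrt{2}}$. Taking $q=2$ in the four integrals turns the conclusion of Theorem \ref{t1} into the asserted expression but with $h^{q}\left( t\right) =h^{2}\left( t\right) $ and $h^{q}\left( 1-t\right) =h^{2}\left( 1-t\right) $ in place of $h\left( t^{2}\right) $ and $h\left( \left( 1-t\right) ^{2}\right) $.

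Next I would use the hypothesis that $h$ is supermultiplicative. For every $t\in \left[ 0,\frac{1}{2}\right] $ or $t\in \left[ \frac{1}{2},1\right] $ we have $t^{2}=t\cdot t$ and $\left( 1-t\right) ^{2}=\left( 1-t\right) \cdot \left( 1-t\right) $, with all factors lying in $\left[ 0,1\right] \subseteq J$, so (\ref{109}) yields $h\left( t^{2}\right) \geq h\left( t\right) h\left( t\right) =h^{2}\left( t\right) $ and $h\left( \left( 1-t\right) ^{2}\right) \geq h^{2}\left( 1-t\right) $. Since $h$ is non-negative, integrating these pointwise inequalities over $\left[ 0,\frac{1}{2}\right] $ and over $\left[ \frac{1}{2},1\right] $, and then raising to the power $\frac{1}{2}$, preserves them. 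Hence each term $\left( \int h^{2}\left( t\right) dt\right) ^{\frac{1}{2}}$ is bounded above by $\left( \int h\left( t^{2}\right) dt\right) ^{\frac{1}{2}}$, and similarly for the terms containing $1-t$. Substituting these four upper bounds into the specialized form of (\ref{a}) while keeping the prefactor $\frac{b-a}{3\sqrt{2}}$ produces exactly the claimed inequality.

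I do not anticipate a genuine obstacle here; the only point requiring care is the \emph{direction} of the estimate. Supermultiplicativity is used to pass from $h^{2}$ to $h\left( \cdot ^{2}\right) $, that is, $h^{2}\left( t\right) \leq h\left( t^{2}\right) $, which is precisely what is needed to \emph{enlarge} — not shrink — the right-hand side of (\ref{a}) and thereby still obtain a valid upper bound. One should also record that the range of integration is contained in $\left[ 0,1\right] $, so that (\ref{109}) may legitimately be applied to the arguments $t^{2}$ and $\left( 1-t\right) ^{2}$.
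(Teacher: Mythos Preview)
Your proposal is correct and is exactly the intended argument: specialize $p=q=2$ in Theorem~\ref{t1} to obtain the prefactor $\frac{b-a}{3\sqrt{2}}$, and then use supermultiplicativity in the form $h^{2}(t)\le h(t^{2})$ (and likewise with $1-t$) to enlarge each of the four integrals. The paper states the corollary without proof, and your write-up fills in precisely the computation and the direction check that make the specialization valid.
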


\begin{corollary}
In Theorem \ref{t1}, if $f\left( a\right) =f\left( \frac{a+b}{2}\right)
=f\left( b\right) $ and $h(t)=1,$ then we have%
\begin{eqnarray*}
&&\left\vert \frac{1}{b-a}\int_{a}^{b}f\left( x\right) dx-f\left( \frac{a+b}{%
2}\right) \right\vert \\
&\leq &\frac{2\left( b-a\right) }{3}\left( \frac{1+2^{p+1}}{6\left(
p+1\right) }\right) ^{\frac{1}{p}}\left( \frac{1}{2}\right) ^{\frac{1}{q}%
}\left\{ \left\vert f^{\prime }\left( a\right) \right\vert +\left\vert
f^{\prime }\left( b\right) \right\vert \right\} \\
&=&\frac{b-a}{3}\left( \frac{1+2^{p+1}}{3\left( p+1\right) }\right) ^{\frac{1%
}{p}}\left\{ \left\vert f^{\prime }\left( a\right) \right\vert +\left\vert
f^{\prime }\left( b\right) \right\vert \right\}
\end{eqnarray*}
\end{corollary}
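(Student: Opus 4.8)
The plan is to obtain this corollary simply by specializing Theorem~\ref{t1} to the constant weight $h(t)\equiv 1$ together with the symmetry hypothesis $f(a)=f\!\left(\frac{a+b}{2}\right)=f(b)$. First I would check admissibility: $h\equiv 1$ satisfies $h(\alpha)=1\geq\alpha$ for every $\alpha\in(0,1)$, so inequality~\eqref{a} applies verbatim. Then I would simplify its left-hand side: under $f(a)=f\!\left(\frac{a+b}{2}\right)=f(b)$ one has $\frac{1}{3}\left[\frac{f(a)+f(b)}{2}+2f\!\left(\frac{a+b}{2}\right)\right]=\frac{1}{3}\left[f\!\left(\frac{a+b}{2}\right)+2f\!\left(\frac{a+b}{2}\right)\right]=f\!\left(\frac{a+b}{2}\right)$, so the left-hand side of~\eqref{a} becomes exactly $\left|\frac{1}{b-a}\int_{a}^{b}f(x)\,dx-f\!\left(\frac{a+b}{2}\right)\right|$.

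Next I would evaluate the right-hand side of~\eqref{a}. With $h\equiv 1$ we have $h^{q}(t)=h^{q}(1-t)=1$, so each of the four integrals $\int_{0}^{1/2}h^{q}(\cdot)\,dt$ and $\int_{1/2}^{1}h^{q}(\cdot)\,dt$ equals $\frac{1}{2}$. Hence both bracketed sums in~\eqref{a} reduce to $\left(\frac{1}{2}\right)^{1/q}+\left(\frac{1}{2}\right)^{1/q}=2\left(\frac{1}{2}\right)^{1/q}$, and pulling this common factor out of the braces yields the first displayed bound $\frac{2(b-a)}{3}\left(\frac{1+2^{p+1}}{6(p+1)}\right)^{1/p}\left(\frac{1}{2}\right)^{1/q}\left\{|f'(a)|+|f'(b)|\right\}$.

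Finally I would pass to the closed form on the last line via the conjugacy $\frac{1}{p}+\frac{1}{q}=1$: since $1-\frac{1}{q}=\frac{1}{p}$, we get $2\left(\frac{1}{2}\right)^{1/q}=2^{1-1/q}=2^{1/p}$, and therefore $2^{1/p}\left(\frac{1+2^{p+1}}{6(p+1)}\right)^{1/p}=\left(\frac{2(1+2^{p+1})}{6(p+1)}\right)^{1/p}=\left(\frac{1+2^{p+1}}{3(p+1)}\right)^{1/p}$, which is the asserted equality. The whole argument is routine substitution and arithmetic; the only point that requires a moment's care is the exponent identity $2\left(\tfrac{1}{2}\right)^{1/q}=2^{1/p}$, so I anticipate no genuine obstacle.
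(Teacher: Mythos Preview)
Your proposal is correct and is exactly the intended derivation: the paper states this corollary without proof, since it follows immediately from Theorem~\ref{t1} by the substitution $h\equiv 1$ and the hypothesis $f(a)=f\!\left(\frac{a+b}{2}\right)=f(b)$, together with the exponent identity $2\left(\tfrac12\right)^{1/q}=2^{1/p}$ that you verify.
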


\begin{corollary}
\label{aa}In Theorem \ref{t1}, if we choose $h\left( t\right) =t$, we obtain%
\begin{eqnarray*}
&&\left\vert \frac{1}{b-a}\int_{a}^{b}f\left( x\right) dx-\frac{1}{3}\left[ 
\frac{f\left( a\right) +f\left( b\right) }{2}+2f\left( \frac{a+b}{2}\right) %
\right] \right\vert \\
&\leq &\frac{\left( b-a\right) }{3}\left( \frac{1+2^{p+1}}{6\left(
p+1\right) }\right) ^{\frac{1}{p}}\left\{ \left\vert f^{\prime }\left(
a\right) \right\vert \left[ \left( \frac{\left( \frac{1}{2}\right) ^{q+1}}{%
q+1}\right) ^{\frac{1}{q}}+\left( \frac{1}{2q+2}\left( 2-\left( \frac{1}{2}%
\right) ^{q}\right) \right) ^{\frac{1}{q}}\right] \right. \\
&&+\left. \left\vert f^{\prime }\left( b\right) \right\vert \left[ \left( 
\frac{1}{2q+2}\left( 2-\left( \frac{1}{2}\right) ^{q}\right) \right) ^{\frac{%
1}{q}}+\left( \frac{\left( \frac{1}{2}\right) ^{q+1}}{q+1}\right) ^{\frac{1}{%
q}}\right] \right\} \\
&=&\frac{\left( b-a\right) }{6}\left( \frac{1+2^{p+1}}{3\left( p+1\right) }%
\right) ^{\frac{1}{p}}\left( \frac{1}{q+1}\right) ^{\frac{1}{q}}\left[ \frac{%
\left\vert f^{\prime }\left( a\right) \right\vert }{2}+\left\vert f^{\prime
}\left( b\right) \right\vert \left( 2-\left( \frac{1}{2}\right) ^{q}\right)
^{\frac{1}{q}}\right] .
\end{eqnarray*}
\end{corollary}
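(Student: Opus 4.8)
The plan is to specialize Theorem~\ref{t1} to the weight $h(t)=t$ and then carry out the few elementary integral evaluations that remain. First I would check that nothing in the hypotheses of Theorem~\ref{t1} is lost: with $h(t)=t$ we have $h(\alpha)=\alpha\ge\alpha$ for every $\alpha\in(0,1)$, the function $h^{q}(t)=t^{q}$ is integrable on $[a,b]$, and the requirement that $\left|f'\right|$ be $h$-convex is exactly the statement that $\left|f'\right|$ is a nonnegative convex function on $I$, which is assumed. Hence inequality~(\ref{a}) holds verbatim for $h(t)=t$, and the corollary is obtained simply by evaluating the four integrals occurring on its right-hand side.

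Next I would compute those integrals. By the power rule, $\int_{0}^{1/2}t^{q}\,dt=\frac{(1/2)^{q+1}}{q+1}$ and $\int_{1/2}^{1}t^{q}\,dt=\frac{1-(1/2)^{q+1}}{q+1}=\frac{1}{2q+2}\bigl(2-(1/2)^{q}\bigr)$; the substitution $u=1-t$ gives $\int_{0}^{1/2}(1-t)^{q}\,dt=\int_{1/2}^{1}u^{q}\,du=\frac{1}{2q+2}\bigl(2-(1/2)^{q}\bigr)$ and $\int_{1/2}^{1}(1-t)^{q}\,dt=\int_{0}^{1/2}u^{q}\,du=\frac{(1/2)^{q+1}}{q+1}$. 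Substituting these four values into the right-hand side of~(\ref{a}) yields at once the first displayed bound in the statement.

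It then remains to collapse that bound to closed form. Pulling the common factor $\bigl(\tfrac{1}{q+1}\bigr)^{1/q}$, together with a spare power of $\tfrac12$, out of each bracket and using $\tfrac1p+\tfrac1q=1$ to combine $\bigl(\tfrac12\bigr)^{1/p}\bigl(\tfrac12\bigr)^{1/q}=\tfrac12$ converts the leading constant $\tfrac{b-a}{3}\bigl(\tfrac{1+2^{p+1}}{6(p+1)}\bigr)^{1/p}$ into $\tfrac{b-a}{6}\bigl(\tfrac{1+2^{p+1}}{3(p+1)}\bigr)^{1/p}$, since $\bigl(\tfrac{1+2^{p+1}}{6(p+1)}\bigr)^{1/p}=\bigl(\tfrac12\bigr)^{1/p}\bigl(\tfrac{1+2^{p+1}}{3(p+1)}\bigr)^{1/p}$ and the leftover $2^{-1/p}$ merges with $2^{-1/q}$. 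Grouping the coefficients of $\left|f'(a)\right|$ and $\left|f'(b)\right|$ then gives the second display, completing the proof. No new inequality is used beyond what Theorem~\ref{t1} already supplies; the only real work — and the place where one can most easily slip — is the algebraic bookkeeping of the $1/q$-th powers of $(1/2)^{q+1}/(q+1)$ and of $\tfrac{1}{2q+2}\bigl(2-(1/2)^{q}\bigr)$, together with the recognition that the two forms of the $p$-constant agree, so that no factor of $2^{-1/q}$ is lost when it is absorbed into the $\tfrac16$.
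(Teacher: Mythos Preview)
Your approach---substitute $h(t)=t$ into Theorem~\ref{t1} and evaluate the four $t^{q}$-integrals---is exactly the intended one, and your computation of
\[
\int_{0}^{1/2}t^{q}\,dt=\frac{(1/2)^{q+1}}{q+1},\qquad
\int_{1/2}^{1}t^{q}\,dt=\frac{1}{2q+2}\Bigl(2-(1/2)^{q}\Bigr),
\]
together with the symmetry $\int_{0}^{1/2}(1-t)^{q}\,dt=\int_{1/2}^{1}t^{q}\,dt$ and $\int_{1/2}^{1}(1-t)^{q}\,dt=\int_{0}^{1/2}t^{q}\,dt$, correctly produces the first displayed bound in the corollary.

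The gap is in your last paragraph. Write $A=\bigl(\tfrac{(1/2)^{q+1}}{q+1}\bigr)^{1/q}$ and $B=\bigl(\tfrac{1}{2q+2}(2-(1/2)^{q})\bigr)^{1/q}$. The first display has coefficient $A+B$ on $\left|f'(a)\right|$ and coefficient $B+A$ on $\left|f'(b)\right|$; these are \emph{equal}, so the expression is symmetric in $\left|f'(a)\right|$ and $\left|f'(b)\right|$. Your factor-extraction is correct and gives
\[
\frac{b-a}{6}\left(\frac{1+2^{p+1}}{3(p+1)}\right)^{1/p}\left(\frac{1}{q+1}\right)^{1/q}
\Bigl[\tfrac{1}{2}+\bigl(2-(1/2)^{q}\bigr)^{1/q}\Bigr]\bigl(\left|f'(a)\right|+\left|f'(b)\right|\bigr),
\]
not the asymmetric bracket $\bigl[\tfrac{1}{2}\left|f'(a)\right|+\left|f'(b)\right|(2-(1/2)^{q})^{1/q}\bigr]$ printed in the corollary. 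In other words, the ``$=$'' in the statement is a typo (it drops two of the four cross terms), and your sentence ``grouping the coefficients \dots\ then gives the second display'' cannot be made to work as written. You should either flag the misprint and record the symmetric expression above, or stop after the first display, which you have established rigorously.
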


\begin{theorem}
\label{t2}Let $h:J\subset 
\mathbb{R}
\rightarrow 
\mathbb{R}
\ $be a nonnegative supermultiplicative functions$,\ f:I\subset \left[
0,\infty \right) \rightarrow 
\mathbb{R}
$ be a differentiable function on $I^{\circ }$ such that $f^{\prime }\in L%
\left[ a,b\right] $ where $a,b\in I^{\circ }$ with $a<b$ and $h\left( \alpha
\right) \geq \alpha $. If $\left\vert f^{\prime }\right\vert $ is $h-$convex
on $I$, then%
\begin{eqnarray}
&&\left\vert \frac{1}{b-a}\int_{a}^{b}f\left( x\right) dx-\frac{1}{3}\left[ 
\frac{f\left( a\right) +f\left( b\right) }{2}+2f\left( \frac{a+b}{2}\right) %
\right] \right\vert   \notag \\
&\leq &\left( b-a\right) \left\{ \left\vert f^{\prime }\left( a\right)
\right\vert \left[ A\right] +\left\vert f^{\prime }\left( b\right)
\right\vert \left[ B\right] \right\}   \label{22}
\end{eqnarray}%
where%
\begin{eqnarray*}
A &=&\int_{0}^{\frac{1}{6}}h\left( t\left[ \frac{1}{6}-t\right] \right)
dt+\int_{\frac{1}{6}}^{\frac{1}{2}}h\left( t\left[ t-\frac{1}{6}\right]
\right) dt+\int_{\frac{1}{2}}^{\frac{5}{6}}h\left( t\left[ \frac{5}{6}-t%
\right] \right) dt+\int_{\frac{5}{6}}^{1}h\left( t\left[ t-\frac{5}{6}\right]
\right) dt \\
B &=&\int_{0}^{\frac{1}{6}}h\left[ \left( 1-t\right) \left( \frac{1}{6}%
-t\right) \right] dt+\int_{\frac{1}{6}}^{\frac{1}{2}}h\left[ \left(
1-t\right) \left( t-\frac{1}{6}\right) \right] dt \\
&&+\int_{\frac{1}{2}}^{\frac{5}{6}}h\left[ \left( 1-t\right) \left( \frac{5}{%
6}-t\right) \right] dt+\int_{\frac{5}{6}}^{1}h\left[ \left( 1-t\right)
\left( t-\frac{5}{6}\right) \right] dt.
\end{eqnarray*}
\end{theorem}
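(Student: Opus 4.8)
The plan is to imitate the proof of Theorem~\ref{t1}, but to replace the appeal to H\"older's inequality by the supermultiplicativity of $h$. First I would invoke Lemma~\ref{LEM}, move the absolute value inside the integral, and break $[0,1]$ at the three points $\frac16,\frac12,\frac56$ so that the kernel $k(t)$ keeps a constant sign on each piece. Explicitly, $|k(t)|=\frac16-t$ on $\left[0,\frac16\right]$, $|k(t)|=t-\frac16$ on $\left[\frac16,\frac12\right]$, $|k(t)|=\frac56-t$ on $\left[\frac12,\frac56\right]$ and $|k(t)|=t-\frac56$ on $\left[\frac56,1\right]$. This rewrites the left-hand side of (\ref{22}) as $(b-a)$ times a sum of four integrals of the shape $\int|k(t)|\,|f^{\prime}(ta+(1-t)b)|\,dt$.

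Next I would apply the $h$-convexity of $|f^{\prime}|$, namely $|f^{\prime}(ta+(1-t)b)|\le h(t)|f^{\prime}(a)|+h(1-t)|f^{\prime}(b)|$, so that each of the four integrals splits into a contribution to $|f^{\prime}(a)|$ weighted by $|k(t)|h(t)$ and a contribution to $|f^{\prime}(b)|$ weighted by $|k(t)|h(1-t)$.

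The crucial step is the treatment of these weights. Since $h$ is non-negative and $h(\alpha)\ge\alpha$ for $\alpha\in(0,1)$, and since $|k(t)|\le\frac16<1$ on the whole interval, we have $|k(t)|\le h(|k(t)|)$, whence $|k(t)|h(t)\le h(|k(t)|)h(t)$ and $|k(t)|h(1-t)\le h(|k(t)|)h(1-t)$. Now supermultiplicativity, $h(x)h(y)\le h(xy)$, gives $h(|k(t)|)h(t)\le h\bigl(t|k(t)|\bigr)$ and $h(|k(t)|)h(1-t)\le h\bigl((1-t)|k(t)|\bigr)$. Plugging in the piecewise expressions for $|k(t)|$ turns the four integrals bounding the $|f^{\prime}(a)|$-part into exactly the four summands of $A$, and the four integrals bounding the $|f^{\prime}(b)|$-part into exactly the four summands of $B$. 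Summing and factoring out $(b-a)$ gives (\ref{22}).

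The delicate point --- really the only one --- is the order in which the two hypotheses on $h$ must be used: one first enlarges the scalar $|k(t)|$ to $h(|k(t)|)$ via $h(\alpha)\ge\alpha$, and only afterwards merges the two $h$-values using supermultiplicativity; the reverse order is not available. One should also note in passing that each product $t|k(t)|$ and $(1-t)|k(t)|$ lies in $(0,1)$, hence in the domain $J$ where $h$ is defined and non-negative, so that all the integrals in $A$ and $B$ are meaningful; apart from this bookkeeping the argument is entirely routine.
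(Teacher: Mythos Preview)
Your proposal is correct and follows exactly the paper's argument: Lemma~\ref{LEM}, $h$-convexity of $|f'|$, then the combination $h(\alpha)\ge\alpha$ followed by supermultiplicativity to pass from $|k(t)|h(t)$ to $h\bigl(t|k(t)|\bigr)$ on each of the four subintervals. The only slip is the harmless remark that $|k(t)|\le\tfrac16$; in fact $|k(t)|$ reaches $\tfrac13$ at $t=\tfrac12$, but since all you need is $|k(t)|<1$ this does not affect the proof.
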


\begin{proof}
From Lemma \ref{LEM}, $h-$convexity of $\left\vert f^{\prime }\right\vert $
and properties of absolute value, we have%
\begin{eqnarray*}
&&\left\vert \frac{1}{b-a}\int_{a}^{b}f\left( x\right) dx-\frac{1}{3}\left[ 
\frac{f\left( a\right) +f\left( b\right) }{2}+2f\left( \frac{a+b}{2}\right) %
\right] \right\vert \\
&=&\left( b-a\right) \left\vert \int_{0}^{1}k\left( t\right) f^{\prime
}\left( ta+\left( 1-t\right) b\right) dt\right\vert \\
&\leq &\left( b-a\right) \left\{ \int_{0}^{\frac{1}{2}}\left\vert t-\frac{1}{%
6}\right\vert \left\vert f^{\prime }\left( ta+\left( 1-t\right) b\right)
\right\vert dt+\int_{\frac{1}{2}}^{1}\left\vert t-\frac{5}{6}\right\vert
\left\vert f^{\prime }\left( ta+\left( 1-t\right) b\right) \right\vert
dt\right\} \\
&\leq &\left( b-a\right) \left\{ \int_{0}^{\frac{1}{2}}\left\vert t-\frac{1}{%
6}\right\vert \left( h\left( t\right) \left\vert f^{\prime }\left( a\right)
\right\vert +h\left( 1-t\right) \left\vert f^{\prime }\left( b\right)
\right\vert \right) dt\right. \\
&&\text{ \ \ \ \ \ \ \ \ \ \ }\left. +\int_{\frac{1}{2}}^{1}\left\vert t-%
\frac{5}{6}\right\vert \left( h\left( t\right) \left\vert f^{\prime }\left(
a\right) \right\vert +h\left( 1-t\right) \left\vert f^{\prime }\left(
b\right) \right\vert \right) dt\right\} \\
&=&\left( b-a\right) \left\{ \int_{0}^{\frac{1}{6}}\left( \frac{1}{6}%
-t\right) \left( h\left( t\right) \left\vert f^{\prime }\left( a\right)
\right\vert +h\left( 1-t\right) \left\vert f^{\prime }\left( b\right)
\right\vert \right) dt\right. \\
&&\text{ \ \ \ \ \ \ \ \ \ \ }+\int_{\frac{1}{6}}^{\frac{1}{2}}\left( t-%
\frac{1}{6}\right) \left( h\left( t\right) \left\vert f^{\prime }\left(
a\right) \right\vert +h\left( 1-t\right) \left\vert f^{\prime }\left(
b\right) \right\vert \right) dt \\
&&\text{ \ \ \ \ \ \ \ \ \ \ }+\int_{\frac{1}{2}}^{\frac{5}{6}}\left( \frac{5%
}{6}-t\right) \left( h\left( t\right) \left\vert f^{\prime }\left( a\right)
\right\vert +h\left( 1-t\right) \left\vert f^{\prime }\left( b\right)
\right\vert \right) dt \\
&&\text{ \ \ \ \ \ \ \ \ \ \ }\left. +\int_{\frac{5}{6}}^{1}\left( t-\frac{5%
}{6}\right) \left( h\left( t\right) \left\vert f^{\prime }\left( a\right)
\right\vert +h\left( 1-t\right) \left\vert f^{\prime }\left( b\right)
\right\vert \right) dt\right\} .
\end{eqnarray*}%
By properties of function $h$, we can write%
\begin{eqnarray*}
&&\left\vert \frac{1}{b-a}\int_{a}^{b}f\left( x\right) dx-\frac{1}{3}\left[ 
\frac{f\left( a\right) +f\left( b\right) }{2}+2f\left( \frac{a+b}{2}\right) %
\right] \right\vert \\
&\leq &\left( b-a\right) \left\{ \int_{0}^{\frac{1}{6}}\left[ h\left(
t\left( \frac{1}{6}-t\right) \right) \left\vert f^{\prime }\left( a\right)
\right\vert +h\left( \left( 1-t\right) \left( \frac{1}{6}-t\right) \right)
\left\vert f^{\prime }\left( b\right) \right\vert \right] dt\right. \\
&&\text{ \ \ \ \ \ \ \ \ }+\int_{\frac{1}{6}}^{\frac{1}{2}}\left[ h\left(
t\left( t-\frac{1}{6}\right) \right) \left\vert f^{\prime }\left( a\right)
\right\vert +h\left( \left( 1-t\right) \left( t-\frac{1}{6}\right) \right)
\left\vert f^{\prime }\left( b\right) \right\vert \right] dt \\
&&\text{ \ \ \ \ \ \ \ \ }+\int_{\frac{1}{2}}^{\frac{5}{6}}\left[ h\left(
t\left( \frac{5}{6}-t\right) \right) \left\vert f^{\prime }\left( a\right)
\right\vert +h\left( \left( 1-t\right) \left( \frac{5}{6}-t\right) \right)
\left\vert f^{\prime }\left( b\right) \right\vert \right] dt \\
&&\text{ \ \ \ \ \ \ \ \ }\left. +\int_{\frac{5}{6}}^{1}\left[ h\left(
t\left( t-\frac{5}{6}\right) \right) \left\vert f^{\prime }\left( a\right)
\right\vert +h\left( \left( 1-t\right) \left( t-\frac{5}{6}\right) \right)
\left\vert f^{\prime }\left( b\right) \right\vert \right] dt\right\}
\end{eqnarray*}%
\begin{eqnarray*}
&=&\left( b-a\right) \left\vert f^{\prime }\left( a\right) \right\vert
\left\{ \int_{0}^{\frac{1}{6}}h\left( t\left[ \frac{1}{6}-t\right] \right)
dt+\int_{\frac{1}{6}}^{\frac{1}{2}}h\left( t\left[ t-\frac{1}{6}\right]
\right) dt\right. \\
&&\text{ \ \ \ \ \ \ \ \ \ \ \ \ \ \ \ \ \ \ }\left. +\int_{\frac{1}{2}}^{%
\frac{5}{6}}h\left( t\left[ \frac{5}{6}-t\right] \right) dt+\int_{\frac{5}{6}%
}^{1}h\left( t\left[ t-\frac{5}{6}\right] \right) dt\right\} \\
&&+\left( b-a\right) \left\vert f^{\prime }\left( b\right) \right\vert
\left\{ \int_{0}^{\frac{1}{6}}h\left( \left( 1-t\right) \left( \frac{1}{6}%
-t\right) \right) dt+\int_{\frac{1}{6}}^{\frac{1}{2}}h\left( \left(
1-t\right) \left( t-\frac{1}{6}\right) \right) dt\right. \\
&&\text{ \ \ \ \ \ \ \ \ \ \ \ \ \ \ \ \ \ \ \ }\left. +\int_{\frac{1}{2}}^{%
\frac{5}{6}}h\left[ \left( 1-t\right) \left( \frac{5}{6}-t\right) \right]
dt+\int_{\frac{5}{6}}^{1}h\left[ \left( 1-t\right) \left( t-\frac{5}{6}%
\right) \right] dt\right\}
\end{eqnarray*}%
which completes the proof.
\end{proof}

\begin{corollary}
\label{bb}In Theorem \ref{t2}, if $f\left( a\right) =f\left( \frac{a+b}{2}%
\right) =f\left( b\right) $ and $h(t)=1,$ then we have%
\begin{eqnarray*}
&&\left\vert \frac{1}{b-a}\int_{a}^{b}f\left( x\right) dx-f\left( \frac{a+b}{%
2}\right) \right\vert \\
&\leq &\left( b-a\right) \left\{ \left\vert f^{\prime }\left( a\right)
\right\vert +\left\vert f^{\prime }\left( b\right) \right\vert \right\} .
\end{eqnarray*}
\end{corollary}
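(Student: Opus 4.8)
The plan is to derive Corollary~\ref{bb} directly from Theorem~\ref{t2} by specializing $h$ and exploiting the hypothesis on the three function values; no new analytic input is needed, so the task reduces to checking the admissibility of the chosen $h$ and evaluating two elementary integrals.

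First I would confirm that the constant function $h(t)=1$ satisfies all the hypotheses imposed on $h$ in Theorem~\ref{t2}. It is nonnegative; it is supermultiplicative because $h(xy)=1=h(x)h(y)$ for all $x,y$ (indeed it is multiplicative); and it satisfies $h(\alpha)=1\geq\alpha$ for every $\alpha\in(0,1)$. Hence Theorem~\ref{t2} is applicable, and in this case the standing assumption that $\left\vert f^{\prime}\right\vert$ be $h$-convex simply amounts to $\left\vert f^{\prime}\right\vert$ being a $P$-function in the sense of (\ref{104}).

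Next I would evaluate the constants $A$ and $B$ from Theorem~\ref{t2} at $h\equiv 1$. Each of the four integrands defining $A$ becomes identically $1$, so $A$ equals the sum of the lengths of the intervals $\left[0,\tfrac{1}{6}\right]$, $\left[\tfrac{1}{6},\tfrac{1}{2}\right]$, $\left[\tfrac{1}{2},\tfrac{5}{6}\right]$, $\left[\tfrac{5}{6},1\right]$; since these partition $[0,1]$ we obtain $A=1$, and the identical computation gives $B=1$.

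Finally I would simplify the left-hand side of (\ref{22}) under the hypothesis $f(a)=f\!\left(\tfrac{a+b}{2}\right)=f(b)$. In that case $\tfrac{f(a)+f(b)}{2}=f\!\left(\tfrac{a+b}{2}\right)$, whence
\[
\frac{1}{3}\left[\frac{f(a)+f(b)}{2}+2f\!\left(\frac{a+b}{2}\right)\right]=f\!\left(\frac{a+b}{2}\right).
\]
Substituting this together with $A=B=1$ into (\ref{22}) gives exactly
\[
\left\vert\frac{1}{b-a}\int_{a}^{b}f(x)\,dx-f\!\left(\frac{a+b}{2}\right)\right\vert\leq(b-a)\left\{\left\vert f^{\prime}(a)\right\vert+\left\vert f^{\prime}(b)\right\vert\right\},
\]
which is the claimed inequality. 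I do not anticipate any genuine difficulty: the argument is a routine substitution, and the only steps meriting a moment's attention are verifying that $h\equiv 1$ meets both the supermultiplicativity and the $h(\alpha)\geq\alpha$ conditions of Theorem~\ref{t2}, and noting that the four subintervals in the definitions of $A$ and $B$ tile $[0,1]$.
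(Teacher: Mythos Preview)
Your proposal is correct and is precisely the intended derivation: the paper states Corollary~\ref{bb} without a separate proof, relying on exactly the substitution $h\equiv 1$ in Theorem~\ref{t2} together with the collapse of the Simpson node values that you describe. Your verification that $h\equiv 1$ is admissible and that $A=B=1$ fills in the routine details the paper leaves implicit.
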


\begin{remark}
In Theorem \ref{t2}, if we choose $h(t)=t,$ then we obtain the inequality (%
\ref{b}).
\end{remark}

\begin{theorem}
\label{to3}Let $h:J\subset 
\mathbb{R}
\rightarrow 
\mathbb{R}
\ $be non-negative functions$,\ f:I\subset \left[ 0,\infty \right)
\rightarrow 
\mathbb{R}
$ be a differentiable function on $I^{\circ }$ such that $f^{\prime }\in L%
\left[ a,b\right] $ where $a,b\in I^{\circ }$ with $a<b$. If $\left\vert
f^{\prime }\right\vert $ is $h-$concave on $I$, then%
\begin{eqnarray*}
&&\left\vert \frac{1}{b-a}\int_{a}^{b}f\left( x\right) dx-\frac{1}{3}\left[ 
\frac{f\left( a\right) +f\left( b\right) }{2}+2f\left( \frac{a+b}{2}\right) %
\right] \right\vert  \\
&\leq &\frac{b-a}{12}\left( \frac{2+2^{p+2}}{p+1}\right) ^{\frac{1}{p}}\left[
\frac{1}{h\left( \frac{1}{2}\right) }\right] ^{\frac{1}{q}}\left\vert
f^{\prime }\left( \frac{a+b}{2}\right) \right\vert .
\end{eqnarray*}
\end{theorem}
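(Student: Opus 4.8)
The plan is to follow the template of the proofs of Theorems \ref{t1} and \ref{t2}: start from the identity in Lemma \ref{LEM}, pass to absolute values, apply H\"{o}lder's inequality with exponents $p,q$, and then, since the pointwise $h$-convex majorant of $\left\vert f^{\prime }\right\vert ^{q}$ is no longer available, replace it by the \emph{reversed} Hermite--Hadamard estimate (\ref{102}) furnished by $h$-concavity, which bounds the whole integral of $\left\vert f^{\prime }\right\vert ^{q}$ at once and produces the value at the midpoint.

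Concretely, using Lemma \ref{LEM} (noting $\frac16[f(a)+4f(\frac{a+b}{2})+f(b)]=\frac13[\frac{f(a)+f(b)}{2}+2f(\frac{a+b}{2})]$) and the triangle inequality,
\[
\left\vert \frac{1}{b-a}\int_{a}^{b}f(x)\,dx-\frac{1}{3}\left[ \frac{f(a)+f(b)}{2}+2f\left( \frac{a+b}{2}\right) \right] \right\vert \leq \left( b-a\right) \int_{0}^{1}\left\vert k(t)\right\vert \left\vert f^{\prime }\left( ta+(1-t)b\right) \right\vert dt,
\]
and then H\"{o}lder's inequality (with $\frac{1}{p}+\frac{1}{q}=1$) gives
\[
\leq \left( b-a\right) \left( \int_{0}^{1}\left\vert k(t)\right\vert ^{p}dt\right) ^{1/p}\left( \int_{0}^{1}\left\vert f^{\prime }\left( ta+(1-t)b\right) \right\vert ^{q}dt\right) ^{1/q}.
\]
The first factor is purely numerical: since $\left\vert k(t)\right\vert =\left\vert t-\frac{1}{6}\right\vert $ on $[0,\frac{1}{2})$ and $\left\vert k(t)\right\vert =\left\vert t-\frac{5}{6}\right\vert $ on $[\frac{1}{2},1]$, the two sub-integrals already computed in the proof of Theorem \ref{t1} yield $\int_{0}^{1}\left\vert k(t)\right\vert ^{p}dt=\frac{2}{p+1}\left( \frac{1}{6^{p+1}}+\frac{1}{3^{p+1}}\right) =\frac{2+2^{p+2}}{(p+1)6^{p+1}}$.

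For the second factor I would substitute $x=ta+(1-t)b$ to obtain $\int_{0}^{1}\left\vert f^{\prime }\left( ta+(1-t)b\right) \right\vert ^{q}dt=\frac{1}{b-a}\int_{a}^{b}\left\vert f^{\prime }(x)\right\vert ^{q}dx$, and then apply the left-hand inequality of (\ref{102}) in its reversed ($h$-concave) form to $\left\vert f^{\prime }\right\vert ^{q}$, namely $\frac{1}{b-a}\int_{a}^{b}\left\vert f^{\prime }(x)\right\vert ^{q}dx\leq \frac{1}{2h\left( \frac{1}{2}\right) }\left\vert f^{\prime }\left( \frac{a+b}{2}\right) \right\vert ^{q}$. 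Taking $q$-th roots, multiplying the two factors, and using $\frac{1}{p}+\frac{1}{q}=1$ to consolidate the powers of $2$ and $6$ into the single numerical constant $\frac{1}{12}$, delivers the asserted estimate $\frac{b-a}{12}\left( \frac{2+2^{p+2}}{p+1}\right) ^{1/p}\left[ \frac{1}{h\left( \frac{1}{2}\right) }\right] ^{1/q}\left\vert f^{\prime }\left( \frac{a+b}{2}\right) \right\vert $.

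The step that needs genuine care is applying (\ref{102}) to $\left\vert f^{\prime }\right\vert ^{q}$ rather than to $\left\vert f^{\prime }\right\vert $: for $q=1$ it is immediate, and for $q>1$ one relies on the fact that in this setting the relevant power of the $h$-concave function $\left\vert f^{\prime }\right\vert $ still falls within the scope of the reversed Hermite--Hadamard inequality (the analogue of the power-mean device used in the convex case), while checking that nonnegativity and $h$-concavity hold on all of $[a,b]$. Everything else --- the two elementary integrals of $\left\vert t-\frac{1}{6}\right\vert ^{p}$ and $\left\vert t-\frac{5}{6}\right\vert ^{p}$ and the final bookkeeping of constants --- is the same routine computation already performed in Theorems \ref{t1} and \ref{t2}, so I anticipate no further obstacle.
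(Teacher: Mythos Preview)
Your proposal is correct and follows essentially the same route as the paper: Lemma~\ref{LEM}, triangle inequality, H\"older on $[0,1]$, the explicit evaluation of $\int_0^1|k(t)|^p\,dt$, and then the reversed left Hermite--Hadamard inequality (\ref{102}) applied to $|f'|^{q}$ via $h$-concavity. The paper invokes (\ref{102}) for $|f'|^{q}$ without the caveat you raised about passing from $|f'|$ to $|f'|^{q}$, so your version is, if anything, slightly more careful than the original on that point.
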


\begin{proof}
From Lemma \ref{LEM} and using the H\"{o}lder inequality, we have%
\begin{eqnarray*}
&&\left\vert \frac{1}{b-a}\int_{a}^{b}f\left( x\right) dx-\frac{1}{3}\left[ 
\frac{f\left( a\right) +f\left( b\right) }{2}+2f\left( \frac{a+b}{2}\right) %
\right] \right\vert \\
&\leq &\left( b-a\right) \left\vert \int_{0}^{1}k\left( t\right) f^{\prime
}\left( ta+\left( 1-t\right) b\right) dt\right\vert \\
&\leq &\left( b-a\right) \left( \int_{0}^{1}\left\vert k\left( t\right)
\right\vert ^{p}dt\right) ^{\frac{1}{p}}\left( \int_{0}^{1}\left\vert
f^{\prime }\left( ta+\left( 1-t\right) b\right) \right\vert ^{q}dt\right) ^{%
\frac{1}{q}}
\end{eqnarray*}%
Since $\left\vert f^{\prime }\right\vert $ is $h$-concave on $I,$ by
inequalities (\ref{102}) we have%
\begin{equation*}
\int_{0}^{1}\left\vert f^{\prime }\left( ta+\left( 1-t\right) b\right)
\right\vert ^{q}dt\leq \frac{1}{2h\left( \frac{1}{2}\right) }\left\vert
f^{\prime }\left( \frac{a+b}{2}\right) \right\vert ^{q}.
\end{equation*}%
Therefore, we get%
\begin{eqnarray*}
&&\left\vert \frac{1}{b-a}\int_{a}^{b}f\left( x\right) dx-\frac{1}{3}\left[ 
\frac{f\left( a\right) +f\left( b\right) }{2}+2f\left( \frac{a+b}{2}\right) %
\right] \right\vert \\
&\leq &(b-a)\left\{ \int_{0}^{\frac{1}{2}}\left\vert t-\frac{1}{6}%
\right\vert ^{p}dt+\int_{\frac{1}{2}}^{1}\left\vert t-\frac{5}{6}\right\vert
^{p}dt\right\} ^{\frac{1}{p}}\left[ \frac{1}{2h\left( \frac{1}{2}\right) }%
\left\vert f^{\prime }\left( \frac{a+b}{2}\right) \right\vert ^{q}\right] ^{%
\frac{1}{q}}.
\end{eqnarray*}%
Since%
\begin{equation*}
\int_{0}^{\frac{1}{2}}\left\vert t-\frac{1}{6}\right\vert ^{p}dt=\int_{0}^{%
\frac{1}{6}}\left( \frac{1}{6}-t\right) ^{p}dt+\int_{\frac{1}{6}}^{\frac{1}{2%
}}\left( t-\frac{1}{6}\right) ^{p}dt=\frac{1}{p+1}\left( \frac{1}{6^{p+1}}+%
\frac{1}{3^{p+1}}\right)
\end{equation*}%
and%
\begin{equation*}
\int_{\frac{1}{2}}^{1}\left\vert t-\frac{5}{6}\right\vert ^{p}dt=\int_{\frac{%
1}{2}}^{\frac{5}{6}}\left( \frac{5}{6}-t\right) ^{p}dt+\int_{\frac{5}{6}%
}^{1}\left( t-\frac{5}{6}\right) ^{p}dt=\frac{1}{p+1}\left( \frac{1}{6^{p+1}}%
+\frac{1}{3^{p+1}}\right) ,
\end{equation*}%
we obtain%
\begin{eqnarray*}
&&\left\vert \frac{1}{b-a}\int_{a}^{b}f\left( x\right) dx-\frac{1}{3}\left[ 
\frac{f\left( a\right) +f\left( b\right) }{2}+2f\left( \frac{a+b}{2}\right) %
\right] \right\vert \\
&\leq &\frac{b-a}{12}\left( \frac{2+2^{p+2}}{p+1}\right) ^{\frac{1}{p}}\left[
\frac{1}{h\left( \frac{1}{2}\right) }\right] ^{\frac{1}{q}}\left\vert
f^{\prime }\left( \frac{a+b}{2}\right) \right\vert .
\end{eqnarray*}%
This completes the proof.
\end{proof}

\begin{corollary}
In Theorem \ref{to3}, if we choose $h(t)=t,$ then we obtain%
\begin{eqnarray*}
&&\left\vert \frac{1}{b-a}\int_{a}^{b}f\left( x\right) dx-\frac{1}{3}\left[ 
\frac{f\left( a\right) +f\left( b\right) }{2}+2f\left( \frac{a+b}{2}\right) %
\right] \right\vert \\
&\leq &\frac{b-a}{6}\left( \frac{1+2^{p+1}}{p+1}\right) ^{\frac{1}{p}%
}\left\vert f^{\prime }\left( \frac{a+b}{2}\right) \right\vert .
\end{eqnarray*}
\end{corollary}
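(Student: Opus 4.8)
The plan is to obtain this corollary as a direct specialization of Theorem \ref{to3}, so the whole argument amounts to checking that the choice $h(t)=t$ is admissible and then simplifying the resulting constant. Theorem \ref{to3} only requires $h$ to be a non-negative function on its domain $J$; the identity map $h(t)=t$ is non-negative on the relevant interval, so it is an admissible weight, and with this choice the hypothesis that $|f'|$ is $h$-concave reduces to the usual statement that the (automatically non-negative) function $|f'|$ is concave on $I$.

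Next I would substitute $h(t)=t$ into the conclusion of Theorem \ref{to3}. The only place $h$ enters the bound is through the factor $[1/h(1/2)]^{1/q}$; since $h(1/2)=1/2$, this factor equals $2^{1/q}$, so the right-hand side becomes
\[
\frac{b-a}{12}\left(\frac{2+2^{p+2}}{p+1}\right)^{1/p}2^{1/q}\left|f'\left(\frac{a+b}{2}\right)\right|.
\]

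Finally I would clean up the constant. Writing $2+2^{p+2}=2\left(1+2^{p+1}\right)$ gives $\left(\frac{2+2^{p+2}}{p+1}\right)^{1/p}=2^{1/p}\left(\frac{1+2^{p+1}}{p+1}\right)^{1/p}$, and combining the two powers of $2$ via the conjugacy relation $\frac1p+\frac1q=1$ yields $2^{1/p}\cdot 2^{1/q}=2^{1/p+1/q}=2$. Since $2/12=1/6$, the bound collapses to
\[
\frac{b-a}{6}\left(\frac{1+2^{p+1}}{p+1}\right)^{1/p}\left|f'\left(\frac{a+b}{2}\right)\right|,
\]
which is exactly the asserted inequality. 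There is essentially no obstacle here: the only point requiring a little care is recognizing the factorization $2+2^{p+2}=2(1+2^{p+1})$ together with the identity $2^{1/p+1/q}=2$, which is precisely what turns the constant $\frac{1}{12}$ into $\frac{1}{6}$.
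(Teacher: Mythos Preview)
Your proposal is correct and follows exactly the approach the paper intends: the corollary is stated without a separate proof in the paper, being an immediate substitution of $h(t)=t$ into Theorem \ref{to3}, and you have carried out precisely that substitution together with the constant simplification $2+2^{p+2}=2(1+2^{p+1})$ and $2^{1/p}\cdot 2^{1/q}=2$.
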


\section{APPLICATIONS TO SPECIAL MEANS}

We now consider the means for arbitrary real numbers $\alpha ,\beta $ $%
(\alpha \neq \beta ).$ We take

\begin{enumerate}
\item $Arithmetic$ $mean:$%
\begin{equation*}
A(\alpha ,\beta )=\frac{\alpha +\beta }{2},\text{ \ }\alpha ,\beta \in 
\mathbb{R}
^{+}.
\end{equation*}

\item $Logarithmic$ $mean$:%
\begin{equation*}
L(\alpha ,\beta )=\frac{\alpha -\beta }{\ln \left\vert \alpha \right\vert
-\ln \left\vert \beta \right\vert },\text{ \ \ }\left\vert \alpha
\right\vert \neq \left\vert \beta \right\vert ,\text{ }\alpha ,\beta \neq 0,%
\text{ }\alpha ,\beta \in 
\mathbb{R}
^{+}.
\end{equation*}

\item $Generalized$ $log-mean$:%
\begin{equation*}
L_{n}(\alpha ,\beta )=\left[ \frac{\beta ^{n+1}-\alpha ^{n+1}}{(n+1)(\beta
-\alpha )}\right] ^{\frac{1}{n}},\text{ \ \ \ }n\in 
\mathbb{Z}
\backslash \{-1,0\},\text{ }\alpha ,\beta \in 
\mathbb{R}
^{+}.
\end{equation*}
\end{enumerate}

Now using the results of Section 2, we give some applications for special
means of real numbers.

\begin{proposition}
Let $a,b\in 
\mathbb{R}
^{+}$, $0<a<b$ and $n\in 
\mathbb{N}
,n>1.$ Then, we have%
\begin{eqnarray*}
&&\left\vert L_{n}^{n}(a,b)-\frac{1}{3}\left[ A(a^{n},b^{n})-A^{n}(a,b)%
\right] \right\vert \\
&\leq &n\frac{\left( b-a\right) }{6}\left( \frac{1+2^{p+1}}{3\left(
p+1\right) }\right) ^{\frac{1}{p}}\left( \frac{1}{q+1}\right) ^{\frac{1}{q}}%
\left[ \frac{a^{n-1}}{2}+b^{n-1}\left( 2-\left( \frac{1}{2}\right)
^{q}\right) ^{\frac{1}{q}}\right] .
\end{eqnarray*}
\end{proposition}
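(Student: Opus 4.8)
The plan is to apply Corollary \ref{aa} --- that is, Theorem \ref{t1} specialised to $h(t)=t$ --- to the function $f(x)=x^{n}$ on the interval $[a,b]\subset(0,\infty)$. First I would verify the hypotheses: $f$ is differentiable on $I^{\circ}=(0,\infty)$ with $f^{\prime}(x)=nx^{n-1}$, so $\left\vert f^{\prime}(x)\right\vert =nx^{n-1}$ on $[a,b]$, and since $n$ is a natural number with $n>1$ we have $n-1\geq 1$, whence $x\mapsto nx^{n-1}$ is a nonnegative convex function on $[0,\infty)$, i.e.\ $\left\vert f^{\prime}\right\vert \in SX(h,I)$ with $h(t)=t$; the integrability conditions are trivial and the requirement $h(\alpha)\geq\alpha$ holds (with equality). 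Hence Corollary \ref{aa} is applicable to this $f$.

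Next I would translate every quantity occurring in Corollary \ref{aa} into the language of the special means: the integral mean is $\frac{1}{b-a}\int_{a}^{b}x^{n}\,dx=\frac{b^{n+1}-a^{n+1}}{(n+1)(b-a)}=L_{n}^{n}(a,b)$; the endpoint average is $\frac{f(a)+f(b)}{2}=\frac{a^{n}+b^{n}}{2}=A(a^{n},b^{n})$; the midpoint value is $f\!\left(\frac{a+b}{2}\right)=\left(\frac{a+b}{2}\right)^{n}=A^{n}(a,b)$; and the derivative data are $\left\vert f^{\prime}(a)\right\vert =na^{n-1}$ and $\left\vert f^{\prime}(b)\right\vert =nb^{n-1}$. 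Substituting these identifications recasts the left-hand side of Corollary \ref{aa} in terms of $L_{n}^{n}(a,b)$, $A(a^{n},b^{n})$ and $A^{n}(a,b)$, which is the left-hand side appearing in the Proposition.

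Finally, inserting $\left\vert f^{\prime}(a)\right\vert =na^{n-1}$ and $\left\vert f^{\prime}(b)\right\vert =nb^{n-1}$ into the bracket $\big[\tfrac{1}{2}\left\vert f^{\prime}(a)\right\vert +\left\vert f^{\prime}(b)\right\vert\,(2-(\tfrac12)^{q})^{1/q}\big]$ on the right of Corollary \ref{aa} and pulling the common factor $n$ outside produces exactly the right-hand side claimed. The argument is therefore a direct substitution with no real obstacle; the only genuinely substantive point is the convexity of $\left\vert f^{\prime}\right\vert =nx^{n-1}$, which is precisely what forces the restriction $n>1$ --- for $0<n-1<1$ the power $x^{n-1}$ would be concave and Corollary \ref{aa} could not be invoked, in which case one would instead turn to an $h$-concave estimate such as Theorem \ref{to3}.
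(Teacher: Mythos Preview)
Your proposal is correct and follows exactly the paper's own approach: the paper's proof is the single sentence ``The assertion follows from Corollary \ref{aa} applied for $f(x)=x^{n}$,'' and you have carried out precisely that substitution with the hypotheses verified in detail. Your observation that the restriction $n>1$ is what guarantees convexity of $\left\vert f^{\prime}\right\vert=nx^{n-1}$ is a nice addition that the paper leaves implicit.
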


\begin{proof}
The assertion follows from Corollary \ref{aa} applied for $f(x)=x^{n},$ $%
x\in 
\mathbb{R}
,$ $n\in 
\mathbb{N}
.$
\end{proof}

\begin{proposition}
Let $a,b\in 
\mathbb{R}
^{+}$, $a<b.$ Then, we have%
\begin{equation*}
\left\vert L_{n}^{n}(a,b)-A^{n}(a,b)\right\vert \leq \left( b-a\right) \left[
\frac{1}{a^{2}}+\frac{1}{b^{2}}\right] .
\end{equation*}
\end{proposition}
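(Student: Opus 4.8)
The plan is to obtain this inequality as a special case of Corollary \ref{bb} (equivalently, of Theorem \ref{t2} with $h\equiv 1$) applied to the function $f(x)=1/x$ on $[a,b]\subset\mathbb{R}^{+}$. First I would check the hypotheses. The map $f$ is differentiable on $(a,b)$ with $f'(x)=-1/x^{2}$, which is continuous and bounded there, so $f'\in L[a,b]$; the constant $h\equiv 1$ is non-negative, supermultiplicative (indeed $h(xy)=1=h(x)h(y)$), and satisfies $h(\alpha)=1\ge\alpha$ for $\alpha\in(0,1)$; and $|f'(x)|=1/x^{2}$ is a non-negative $P$-function on $[a,b]$, hence lies in $SX(h,I)$ with $h\equiv 1$, since for $x,y\in[a,b]$ and $t\in[0,1]$ one has $tx+(1-t)y\ge\min\{x,y\}$ and therefore $\frac{1}{(tx+(1-t)y)^{2}}\le\frac{1}{x^{2}}+\frac{1}{y^{2}}$.

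The rest is bookkeeping: I would rewrite the two sides in terms of the means. The integral term is $\frac{1}{b-a}\int_{a}^{b}\frac{dx}{x}=\frac{\ln b-\ln a}{b-a}=\frac{1}{L(a,b)}$, which is the value at $n=-1$ of $L_{n}^{n}(a,b)=\frac{1}{b-a}\int_{a}^{b}x^{n}\,dx=\frac{b^{n+1}-a^{n+1}}{(n+1)(b-a)}$, read by continuity; the midpoint term is $f\!\left(\frac{a+b}{2}\right)=\frac{2}{a+b}=\frac{1}{A(a,b)}=A^{-1}(a,b)$; and on the right-hand side $|f'(a)|+|f'(b)|=\frac{1}{a^{2}}+\frac{1}{b^{2}}$. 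Substituting these identifications into the estimate furnished by Corollary \ref{bb} then yields $\left|L_{-1}^{-1}(a,b)-A^{-1}(a,b)\right|\le(b-a)\left[\frac{1}{a^{2}}+\frac{1}{b^{2}}\right]$, i.e. the claim at $n=-1$.

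I do not anticipate any real difficulty, since every computation is elementary; the only two points that need a word of care are notational. First, $n=-1$ is a limiting value of the generalized logarithmic mean — the definition of $L_{n}$ recalled above explicitly excludes it — so $L_{-1}^{-1}(a,b)$ must be understood as $1/L(a,b)$, the limit of $L_{n}^{n}(a,b)$ as $n\to-1$. Second, one must be sure that the three-term Simpson expression $\frac{1}{3}\left[\frac{f(a)+f(b)}{2}+2f\!\left(\frac{a+b}{2}\right)\right]$ occurring in Theorem \ref{t2} collapses to the single midpoint value $f\!\left(\frac{a+b}{2}\right)=A^{-1}(a,b)$; securing this reduction is exactly the content of Corollary \ref{bb}, which is why I would quote that corollary rather than Theorem \ref{t2} itself.
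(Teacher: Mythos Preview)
Your approach is exactly the paper's: apply Corollary~\ref{bb} to $f(x)=1/x$ on $[a,b]$. However, there is a genuine gap that you (and, in fact, the paper) overlook. Corollary~\ref{bb} carries the explicit hypothesis $f(a)=f\!\left(\frac{a+b}{2}\right)=f(b)$; that equality is precisely what makes the Simpson combination $\frac{1}{3}\bigl[\tfrac{f(a)+f(b)}{2}+2f\!\left(\tfrac{a+b}{2}\right)\bigr]$ collapse to the single midpoint value $f\!\left(\tfrac{a+b}{2}\right)$. For $f(x)=1/x$ with $0<a<b$ one has $f(a)=1/a\neq 1/b=f(b)$, so this hypothesis fails and the collapse you describe in your last paragraph does not occur. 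You write that ``securing this reduction is exactly the content of Corollary~\ref{bb}'', but invoking the corollary does not make its hypotheses hold --- and here they do not.

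Without that collapse, what Theorem~\ref{t2} with $h\equiv 1$ actually yields for $f(x)=1/x$ is a bound with the full Simpson expression on the left,
\[
\left|\frac{1}{L(a,b)}-\frac{1}{3}\left[\frac{1}{2}\Bigl(\frac{1}{a}+\frac{1}{b}\Bigr)+\frac{4}{a+b}\right]\right|\le (b-a)\left[\frac{1}{a^{2}}+\frac{1}{b^{2}}\right],
\]
which is not the stated inequality $\bigl|L^{-1}(a,b)-A^{-1}(a,b)\bigr|\le(b-a)\bigl[1/a^{2}+1/b^{2}\bigr]$. So the argument, as written, does not establish the claim; this defect is inherited from the paper's own proof.
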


\begin{proof}
The assertion follows from Corollary \ref{bb} applied for $f(x)=\frac{1}{x},$
$x\in \lbrack a,b].$
\end{proof}

\end{document}